\newtheorem{teo}{Theorem}
\newtheorem*{remark}{Remark}
\title{Integral representation of the sub-elliptic heat kernel on the complex anti-de Sitter fibration}
\author[F. Baudoin]{Fabrice Baudoin}
\address{Department of Mathematics, University of Connecticut, USA}
\email{fabrice.baudoin@uconn.edu}
\author[N. Demni]{Nizar Demni}
\address{IRMAR, Universit\'e de Rennes 1\\ Campus de
Beaulieu\\ 35042 Rennes cedex\\ France}
\email{nizar.demni@univ-rennes1.fr}
\thanks{F. Baudoin  is supported in part by NSF Grant DMS-1660031}
\keywords{Anti-de Sitter fibration; Hyperbolic ball; Real hyperbolic space; Subelliptic heat kernel; Generalized Maass Laplacian.}
\begin{document}
\maketitle

\begin{abstract}
We derive an integral representation for the subelliptic heat kernel of the complex anti-de Sitter fibration. Our proof is different from the one used in J. Wang \cite{Wan} since it appeals to the commutativity of the D'Alembertian and of the Laplacian acting on the vertical variable rather than the analytic continuation of the heat semigroup of the real hyperbolic space. Our approach also sheds the light on the connection between the sub-Laplacian of the above fibration and the so-called generalized Maass Laplacian, and on the role played by the odd dimensional real hyperbolic space.
\end{abstract}

\section{Introduction}

The study of explicit expressions for subelliptic heat kernels and related Green functions on Cauchy-Riemann (CR) model spaces is motivated by many fundamental questions in:
\begin{itemize}
\item Analysis, where explicit expressions of Green functions play an important role in the analysis of Sobolev-type inequalities and subelliptic estimates (see for instance \cite{Fol, Je-Lee});
\item Geometry, where explicit heat kernels can be used to compute distances and diameters (see \cite{Bau-Bon2, Bau-Wan1});
\item Probability, where explicit heat kernels are also used to compute limit laws (see \cite{Bau-Wan2}).
\end{itemize}
We refer to the monograph \cite{Calin} and to its bibliography for a general presentation of heat kernels and of their importance in both the elliptic and the subelliptic frameworks.

The aim of this note is to derive an integral representation of the subelliptic heat kernel of the complex anti-de Sitter (hereafter AdS) fibration, which is the CR Sasakian model space of negative curvature. Previous integral representations were obtained in \cite{Bau-Bon} and in \cite{Wan} where an analytic continuation of the heat semigroup mysteriously led  to the real hyperbolic space having the same real dimension as the AdS. Moreover, in these two papers, the question whether the commutativity of the D'Alembertian of the AdS space and the Laplacian acting on the fibers (circles) may be used to derive such an integral representation was also raised.
The present note answers positively this question and simplifies the proofs in \cite{Bau-Bon, Wan}. It also provides an evidence for the occurrence of the heat kernel of the odd-dimensional real hyperbolic space. 

Finally, let us point out that the connection we discover between the sub-Laplacian of the AdS fibration and the generalized Maass Laplacian is an important observation that extends to the negatively-curved setting the known one between the sub-Laplacian of the Heisenberg group and the magnetic Laplacian in the flat complex space (see e.g. \cite{Koc-Ric}). 


\section{The AdS fibration}

In $\mathbb{C}^{n+1}, n \geq 1$, we consider the signed quadratic form: 
\begin{equation*}
||(z_1, \dots, z_{n+1})||_H = \sum_{i=1}^n |z_i|^2 - |z_{n+1}|^2. 
\end{equation*}
Then, the AdS space is the quadric defined by 
\begin{equation*}
\{z = (z_1, \dots, z_{n+1}) \in \mathbb{C}^{n+1}, ||z||_H = -1\},
\end{equation*}
and the circle group $U(1)$ acts on it by $z \mapsto ze^{i\theta}$. This action gives rise to the AdS fibration over the complex hyperbolic (Bergman) ball $H_n(\mathbb{C})$ with $U(1)$-fibers. The identification of the base space is more transparent if we consider the projective model of the AdS space:
\begin{equation*}
\{z = (z_1, \dots, z_{n+1}) \in \mathbb{C}^{n+1}, ||z||_H < 0\},
\end{equation*}
and use the inhomogeneous coordinates $w_i := z_i/z_{n+1}, i =1, \dots, n$ which obviously satisfies $||w|| < 1$ (here, $||\cdot||$ is the Euclidean norm of $\mathbb{C}^n$).  The corresponding sub-Laplacian is then the horizontal lift to the AdS space of the Laplace-Beltrami operator (for the standard Bergman metric) on $H_n(\mathbb{C})$.  It is important to observe that the submersion $AdS \to H_n(\mathbb{C})$ is only semi-Riemannian in the sense of \cite{Bad-Ian}, but the sub-Laplacian is a subelliptic operator.

We aim to prove the following theorem, whose exact notations will be explained in the text:

\begin{teo}\label{main}
The heat kernel of the sub-Laplacian on the AdS space is given by

\[
 \frac{1}{\sqrt{4\pi t}} \sum_{k \in \mathbb{Z}} \int_{\mathbb{R}} e^{(u-i\theta - 2ik\pi)^2/(4t)} q_t( \cosh(u)\cosh(d(0,y))) du
\]
where $q_t$ is the Riemannian heat kernel on the $2n+1$-dimensional real hyperbolic space ${\bf H}^{2n+1}$.
\end{teo}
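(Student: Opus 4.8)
The plan is to exploit the submersion structure together with the commutation of the relevant operators, reducing everything to a one-dimensional fibre computation plus the known heat kernel on $\mathbf{H}^{2n+1}$. Since the $U(1)$-action is by isometries of the sub-Riemannian structure and the sub-Laplacian $L$ is $U(1)$-invariant, its heat kernel depends only on the fibre displacement $\theta$ and on the base distance $r:=d(0,y)$; I will therefore look for a function $p_t(\theta,r)$. Writing $T=\partial_\theta$ for the vertical (Reeb) vector field generating the $U(1)$-fibres and $\Box$ for the D'Alembertian of the Lorentzian AdS metric, the semi-Riemannian structure with timelike fibres yields the algebraic identity $L=\Box+T^2$. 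Because $T$ generates isometries, $[\Box,T]=0$, so the two flows factor as $e^{tL}=e^{t\Box}\,e^{tT^2}$. This factorization is the whole point: it replaces Wang's analytic continuation of the hyperbolic semigroup by a product of two more tractable pieces.

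The vertical factor is immediate. The operator $T^2=\partial_\theta^2$ is the Laplacian of the circle $U(1)=\mathbb{R}/2\pi\mathbb{Z}$, so $e^{tT^2}$ has the wrapped-Gaussian (theta-function) kernel $\tfrac{1}{\sqrt{4\pi t}}\sum_{k}e^{-(\theta-2k\pi)^2/(4t)}$. This already accounts for the sum over $k\in\mathbb{Z}$ and for the Gaussian in $\theta$ appearing in the statement; the shifts $2k\pi$ are nothing but the images of $\theta$ under the deck group of the universal cover $\mathbb{R}\to U(1)$.

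The heart of the argument is the identification of the factor $e^{t\Box}$ with genuine heat flow on the odd-dimensional real hyperbolic space. I would compute the radial part of $\Box$ in the coordinates $(\theta,r)$ and then perform the Wick rotation $\theta\mapsto iu$ that turns the timelike fibre into a spacelike one; the claim to verify is that, under the substitution $\cosh\rho=\cosh u\cosh r$ (the hyperbolic Pythagorean relation for Fermi coordinates), this radial operator is carried exactly to the radial Laplacian $\partial_\rho^2+2n\coth\rho\,\partial_\rho$ of $\mathbf{H}^{2n+1}$. This is precisely where the dimension $2n+1$ and the real hyperbolic space enter, and it is the algebraic identity I expect to be the main obstacle: one must match not only the second-order term but also the first-order drift, possibly after conjugating by an explicit density, and check that no zeroth-order potential survives. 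Granting it, the Riemannian realization identifies the kernel of $e^{t\Box}$ in the variables $(u,r)$ with $q_t(\cosh u\cosh r)$.

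It remains to assemble the two factors. Composing $e^{t\Box}$ with $e^{tT^2}$ is a Gaussian convolution in the fibre direction which, combined with the Wick rotation $u=i\theta$, converts the wrapped Gaussian of the second step into the analytically continued kernel $\tfrac{1}{\sqrt{4\pi t}}\sum_{k}e^{(u-i\theta-2ik\pi)^2/(4t)}$; the Lorentzian (timelike) signature is exactly what flips the sign of the exponent to $+(\cdots)^2/(4t)$. Integrating this against $q_t(\cosh u\cosh r)\,du$ yields the asserted formula. The analytic steps to be justified are the contour shift implementing $\theta\mapsto iu$, the convergence of the $u$-integral, and the interchange of $\sum_k$ with $\int du$. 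Convergence is delicate but favorable: since $\cosh\rho\ge\cosh u$ forces $\rho\gtrsim|u|$, the Gaussian decay $q_t\sim e^{-\rho^2/(4t)}$ of the odd-dimensional hyperbolic kernel cancels the growth $e^{+u^2/(4t)}$ at leading order, leaving an integrand with genuine sub-Gaussian decay controlled by $e^{-|u|\log\cosh r/(2t)}$, integrable for $r>0$, the diagonal case $r=0$ being recovered by continuity. Finally, I would remark that projecting onto the $e^{im\theta}$-Fourier mode exhibits $L$ on the $m$-th sector as the generalized Maass Laplacian on the base, so the same computation reproves the intertwining between that operator and $\mathbf{H}^{2n+1}$ announced in the introduction.
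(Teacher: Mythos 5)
Your proposal is correct in outline, but it is essentially Wang's original proof reorganized around the commutation relation, not the paper's route. You share with the paper the starting point: $L=\square+\partial_\theta^2$, $[\square,\partial_\theta]=0$, hence $e^{tL}=e^{t\partial_\theta^2}e^{t\square}$, and your endgame (wrapped Gaussian on the fiber, Poisson summation producing the $2ik\pi$ shifts) matches the paper's. Where you diverge is the identification of $e^{t\square}$: you Wick-rotate $\theta\mapsto iu$ and match radial operators, which is precisely the analytic continuation to the $\mathbf{H}^{2n+1}$ heat flow that this paper announces it avoids. The paper instead Fourier-decomposes the kernel of $e^{t\square}$ along the fiber, $p_{t,n}(w,y,\theta)=\sum_{k\in\mathbb{Z}}v_{t,n,k/2}(w,y)e^{-ik\theta}$, recognizes each mode operator as the generalized Maass Laplacian $D_k$, obtains $v_{t,n,k}$ in closed form from the explicit wave kernel of Intissar--Ould Moustapha via the subordination formula $e^{tL}=\frac{1}{\sqrt{4\pi t}}\int_{\mathbb{R}}e^{-x^2/4}\cos(x\sqrt{-L})\,dx$ (this is where $q_t$ and the dimension $2n+1$ emerge, through $\left(-\frac{1}{\sinh x}\frac{d}{dx}\right)^n e^{-x^2/(4t)}$), and resums the modes using ${}_2F_1\left(-k,k,\frac{1}{2};\frac{1-\cosh u}{2}\right)=\cosh(ku)$ together with Poisson summation. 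For what it is worth, the operator identity you flag as the main obstacle is true and elementary: with $\|w\|=\tanh r$, the reduced rotated operator is $\partial_r^2+\left((2n-1)\coth r+\tanh r\right)\partial_r+\cosh^{-2}(r)\,\partial_u^2$, and on functions of $c=\cosh u\cosh r$ it becomes $(c^2-1)\frac{d^2}{dc^2}+(2n+1)c\,\frac{d}{dc}$, exactly the radial $\mathbf{H}^{2n+1}$ Laplacian in the variable $c=\cosh\rho$, with no residual potential (the $e^{-n^2t}$ is already inside $q_t$).

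The one genuinely soft spot in your argument is the sentence ``the Riemannian realization identifies the kernel of $e^{t\square}$ in the variables $(u,r)$ with $q_t(\cosh u\cosh r)$.'' The operator match only shows that $q_t(\cosh u\cosh r)$ solves the rotated equation; the actual kernel of $e^{t\square}$ lives at real $\theta$, and you must still prove that the $2\pi$-periodic continuation $q_t(\cos\theta\cosh r)$ (with $q_t$ continued below argument $1$) is that kernel — a uniqueness-of-continuation or initial-condition verification, made delicate by the fact that $\square$ is not elliptic; its semiboundedness ($\square\le -n^2$) is only seen mode by mode, i.e., through the very $D_k$ decomposition the paper uses. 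So to complete your proof you would either carry out the direct verification that the assembled formula satisfies the subelliptic heat equation with the right $t\to 0^+$ limit (this is Wang's argument), or replace your Wick-rotation step by the paper's Maass-Laplacian/wave-kernel computation, which delivers each $v_{t,n,k}$ rigorously and makes the continuation unnecessary.
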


The theorem in this form is originally due to J. Wang \cite{ Wan}. However, the  role played by the $2n+1$-dimensional real hyperbolic space in the study of the Ads fibration is not apparent in \cite{Wan} and one of the features of our new proof is to show where it comes from (see Remark \ref{hyperbolic} below).

\section{The complex AdS d'Alembertian and the generalized Maass Laplacian}

Let $\theta \in \mathbb{R}/2\pi \mathbb{Z}$ be the coordinate fiber on $U(1)$, $(w_1, \dots, w_n) \in H_n(\mathbb{C})$ and write simply: 
\begin{equation*}
\partial_i := \partial_{w_i}, \quad \overline{\partial}_i := \partial_{\overline{w_i}}.
\end{equation*}

The D'Alembertian of the AdS space induced by the flat $(2n,2)$-signed metric of $\mathbb{R}^{2n,2}$ reads (see \cite{Wan}, p.9): 
\begin{equation*}
\square = 4(1-||w||^2)\left\{\sum_{i,j=1}^n \left(\delta_{i,j} - w_i \overline{w_j}\right)\partial_i\overline{\partial_j}+ \frac{i}{2}\sum_{i=1}^n\left(w_i\partial_i - \overline{w_i}\overline{\partial_i} \right)\partial_{\theta} - \frac{1}{4}\partial^2_{\theta}\right\}.
\end{equation*}
Acting on functions $g: AdS \rightarrow \mathbb{C}$ of the form $g(w_1, \dots, w_n, \theta) = e^{-2ik\theta}f(w_1, \dots, w_n), 2k \in \mathbb{Z}$, this operator reduces to
\begin{equation*}
\square(g)(w_1, \dots, w_n, \theta) = e^{-2ik\theta} D_k(f)(w_1, \dots, w_n) 
\end{equation*}
where 
\begin{equation*}
D_k = 4(1-||w||^2)\left\{\sum_{i,j=1}^n \left(\delta_{i,j} - w_i \overline{w_j}\right)\partial_i\overline{\partial_j}+ k \sum_{i=1}^n\left(w_i\partial_i - \overline{w_i}\overline{\partial_i} \right) + k^2\right\}. 
\end{equation*}
The operator $D_k$ for $k \neq 0$ is known as the generalized Maass operator since for $n=1$, it reduces after performing a weighted Cayley transform to the operator introduced by H. Maass in his investigations of non-analytic automorphic forms on the 
Poincar\'e half-plane (see e.g \cite{Aya-Int}, Remark 2.1): 
\begin{equation*}
\Delta_k = y^2(\partial_x^2 + \partial_y^2) -2ikx\partial_x, \quad x \in \mathbb{R}, y > 0.
\end{equation*}
 The latter is also known as the magnetic Schr\"odinger operator with uniform magnetic field of strength $k$. Moreover, $D_k$ is a Bochner Laplacian:  
\begin{equation*}
D_k =  (-d+ik \alpha)^{\star}(d+ik\theta) + 4k^2,
\end{equation*}
where $d$ is the complex exterior derivative, $\star$ stands the adjoint operator induced by the Bergman metric on differential forms and $\theta$ is the one-form (\cite{Wel}): 
\begin{equation*}
\alpha = -i(\partial_w - \partial_{\overline{w}})\log(1-||w||^2) = \frac{i}{(1-||w||^2)}\sum_{j=1}^n \left(\overline{w_j} dw_j - w_j d\overline{w_j}\right), \quad \partial_{w} := \sum_{i=1}^n \partial_i.
\end{equation*}
It is worth noting that the exterior derivative $d\alpha$ is the K\"ahler form of $H_n(\mathbb{C})$ and that its pull back by the Pseudo-Riemannian submersion map 
\begin{equation*}
(z_1, \dots, z_{n+1}) \rightarrow (w_1,\dots, w_n)
\end{equation*}
 is the contact CR form on the AdS space: 
\begin{equation*}
\eta = -d\theta + \alpha = \Im \left(\sum_{i=1}^n\overline{z_i}dz_i - \overline{z_{n+1}}dz_{n+1}\right),  
\end{equation*}
inducing the horizontal distribution in AdS. That is why the generalized Maass Laplacian is connected to the horizontal lift of the Laplace-Beltrami operator on $H_n(\mathbb{C})$ and in turn to $\square$. A similar connection is already known between the magnetic  
Laplacian in $\mathbb{C}^n$ with uniform field and the sub-Laplacian of the Heisenberg group (\cite{Koc-Ric}), and holds true as well for the Hopf fibration (compare Prop. 2.1, (iv) in \cite{Haf-Int} and Prop. 2.1 in \cite{Bau-Wan}). 

The key ingredient in our proof of Theorem \ref{main} is the following result (see \cite{Aya-Int}, Theorem 2.2, (i)): 
\begin{teo}\label{main1}
For any $t > 0, n \geq 1$, let $v_{t,n,k}$ be the heat kernel of $D_k$ with respect to the Bergman volume measure: 
\begin{equation*}
\frac{dy}{(1-||y||^2)^{n+1}}.
\end{equation*}
Then, 
\begin{equation}\label{IntRe}
v_{t,n,k}(w,y) = \frac{e^{-n^2t}}{(2\pi)^n\sqrt{4\pi t}}  \left(\frac{1-\overline{\langle w,y \rangle}}{1-\langle w, y\rangle}\right)^k 
 \int_{\mathbb{R}} dx \sinh(x) N_k(x,w,y) \left(-\frac{1}{\sinh(x)}\frac{d}{dx}\right)^n e^{-x^2/(4t)},
\end{equation}
where 
\begin{equation*}
N_k(x,w,y) = \frac{1}{\sqrt{\cosh^2(x) - \cosh^2(d(w,y))_+}} {}_2F_1\left(-2k, 2k, \frac{1}{2}; \frac{\cosh(d(w,y)) - \cosh(x)}{2\cosh(d(w,y))}\right),
\end{equation*}
${}_2F_1$ is the Gauss hypergeometric function (\cite{AAR}), and $d(w,y)$ is the hyperbolic distance: 
\begin{equation*}
\cosh^2(d(w,y)) = \frac{|1-\langle w, y\rangle|^2}{(1-||y||^2)(1-||w||^2)}. 
\end{equation*}
\end{teo}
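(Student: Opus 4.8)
The plan is to recognize the right-hand side of \eqref{IntRe} as an integral transform, in a single auxiliary variable, of the Riemannian heat kernel $q_t$ of $\mathbf{H}^{2n+1}$, and then to prove the identity by \emph{uniqueness of the heat semigroup}: since $D_k$ is essentially self-adjoint with respect to the Bergman measure, it suffices to show that the claimed expression solves $\partial_t v = D_k v$ and collapses to the Dirac mass at $w=y$ as $t\to 0^+$. The starting observation is that, by the classical formula (Millson's recursion) for the odd-dimensional hyperbolic heat kernel, $\frac{e^{-n^2t}}{(2\pi)^n\sqrt{4\pi t}}\left(-\frac{1}{\sinh x}\frac{d}{dx}\right)^n e^{-x^2/(4t)}$ is exactly $q_t(\cosh x)$; thus \eqref{IntRe} asserts that $v_{t,n,k}(w,y)$ equals the phase factor times $\int_{\mathbb{R}}\sinh(x)\,N_k(x,w,y)\,q_t(\cosh x)\,dx$. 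The spectral shift $e^{-n^2t}$ is consistent with this, $n^2$ being the bottom of the $L^2$-spectrum of $-\Delta_{\mathbf{H}^{2n+1}}$.

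First I would dispose of the phase factor $\left(\frac{1-\overline{\langle w,y\rangle}}{1-\langle w,y\rangle}\right)^k$. Since $D_k$ is the Bochner (magnetic) Laplacian attached to the one-form $\alpha$, it is $U(n,1)$-equivariant only up to a unimodular cocycle, and this factor is exactly that cocycle evaluated along the geodesic from $w$ to $y$. Placing $w$ at the origin by an isometry and conjugating $D_k$ by the phase gauges away the first-order terms $k\sum_i(w_i\partial_i - \overline{w_i}\overline{\partial_i})$, reducing the problem to a scalar \emph{radial} operator in the single variable $r = d(w,y)$; the phase then records how the kernel is transported back to a general base point $w$.

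The heart of the matter is a transmutation (intertwining) identity: letting $\mathcal{T}$ be the integral operator in the variable $x$ with kernel $\sinh(x)\,N_k(x,w,y)$, I would prove that $\mathcal{T}$ intertwines the radial Laplacian of $\mathbf{H}^{2n+1}$ with the gauged radial part of $D_k$ in the variable $r=d(w,y)$, i.e.\ $D_k\,\mathcal{T} = \mathcal{T}\,\Delta_{\mathbf{H}^{2n+1}}$ on radial functions. Granting this, $\mathcal{T}q_t$ automatically solves the $D_k$-heat equation because $q_t$ solves the hyperbolic one. The Gauss hypergeometric factor ${}_2F_1\!\left(-2k,2k,\tfrac12;\cdot\right)$ enters here as the Riemann function of the associated one-dimensional shifted wave operator, and the support constraint $\cosh x > \cosh d(w,y)$ encoded by the positive part $(\cdot)_+$ reflects finite propagation speed. \textbf{This intertwining step is the main obstacle}: it reduces to a second-order special-function computation verifying that $N_k$ simultaneously satisfies the radial $D_k$-equation in $r$ and the hyperbolic radial equation in $x$, and it is precisely this computation that pins down why the parameters must be $-2k,\,2k,\,\tfrac12$ with argument $(\cosh d - \cosh x)/(2\cosh d)$.

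Finally I would verify the initial condition. As $t\to0^+$ the factor $q_t(\cosh x)$ concentrates at $x=0$, while $N_k$ is singular exactly on $\cosh x = \cosh d(w,y)$, so a Laplace/stationary-phase analysis localizes the integral near $x=d(w,y)$ and shows that $\mathcal{T}q_t$ converges to the Dirac mass at $w=y$ against $dy/(1-\|y\|^2)^{n+1}$, the normalization $1/[(2\pi)^n\sqrt{4\pi t}]$ matching the short-time asymptotics of $q_t$ together with the Jacobian between the variables $x$ and $r$. Uniqueness of the heat kernel of the self-adjoint operator $D_k$ then yields \eqref{IntRe}.
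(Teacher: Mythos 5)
Your overall strategy --- realize the right-hand side of \eqref{IntRe} as an integral transform $\mathcal{T}q_t$ of the heat kernel of $\mathbf{H}^{2n+1}$, check the heat equation via an intertwining relation, check the initial condition as $t \to 0^+$, and conclude by uniqueness of the heat semigroup --- is coherent, and several of your side observations are correct: the identification of $\frac{e^{-n^2t}}{(2\pi)^n\sqrt{4\pi t}}\left(-\frac{1}{\sinh x}\frac{d}{dx}\right)^n e^{-x^2/(4t)}$ with $q_t(\cosh x)$ is exactly the paper's Remark following the theorem, the phase factor is indeed the magnetic cocycle, and $n^2$ is indeed the spectral bottom (note, though, that after gauging a zero-order potential $4k^2(1-\|w\|^2)=4k^2/\cosh^2 r$ survives, so the reduced operator is a radial Schr\"odinger-type operator, not the bare radial Laplacian). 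The genuine gap is the step you yourself flag as ``the main obstacle'': the intertwining identity $D_k\,\mathcal{T}=\mathcal{T}\,\Delta_{\mathbf{H}^{2n+1}}$, equivalently the verification that $N_k$, with precisely the parameters $(-2k,2k,\tfrac12)$ and argument $(\cosh d-\cosh x)/(2\cosh d)$, satisfies the required two-variable differential identity. This is not a routine afterthought; it is the entire special-function content of the theorem, and it is exactly what the paper does not reprove but imports from Theorem 2 of \cite{Int-OM}, where $N_k$ appears as the kernel of the wave propagator $\sin(t\sqrt{-D_k-n^2})/\sqrt{-D_k-n^2}$. Since you neither carry out this computation nor point to a source for it, your argument is a program rather than a proof; moreover, executing it honestly would also require handling the boundary terms produced at the singular endpoint $x=d(w,y)$, where $N_k\sim(\cosh^2 x-\cosh^2 d)^{-1/2}$, when you integrate by parts in $x$ --- a finite-propagation-speed subtlety your sketch does not address.

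For comparison, the paper organizes the same transmutation idea through the wave equation rather than through a heat-equation verification: it takes the explicit solution $u(t,\cdot)=\frac{\sin(t\sqrt{-D_k-n^2})}{\sqrt{-D_k-n^2}}f$ from \cite{Int-OM}, differentiates in $t$ to obtain $\cos(t\sqrt{-D_k-n^2})f$, and then produces the heat kernel in one stroke from the spectral subordination formula \eqref{FormSpec} with $L=D_k+n^2$, followed by integrations by parts that generate the operator $\left(-\frac{1}{\sinh x}\frac{d}{dx}\right)^n$. That route bypasses both of your delicate analytic steps: the $t\to 0^+$ Dirac-mass analysis (the initial data are built into the wave solution, and the subordination identity represents $e^{tL}$ exactly via the spectral theorem) and the essential self-adjointness and uniqueness discussion. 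If you completed your intertwining computation you would in effect be re-deriving the wave kernel of \cite{Int-OM}; so either supply that computation in full or cite it, and your scheme then closes.
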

In \cite{Aya-Int}, the authors did not write a detailed proof of this Theorem and rather referred to an unpublished paper. For that reason and for the reader's convenience, we provide below the missing details. 
\begin{proof}
Consider the following wave equation 
\begin{equation*}
\partial_t^2 u(t,w) = (D_k + n^2) u(t,w), \quad t \in \mathbb{R}, w \in H_n(\mathbb{C}), 
\end{equation*}
with intial values 
\begin{equation*}
u(0,w) = 0, \quad \partial_t u(t,w) = f(w),
\end{equation*}
where $f \in C_0^{\infty}(H_n(\mathbb{C}))$. In \cite{Int-OM}, the authors obtained the solution of the wave equation for an operator denoted $\Delta_{\alpha \beta}$ there which reduces to $D_k + n^2$ when $\alpha = k = -\beta$. From Theorem 2 there, it follows that 
\begin{equation*}
u(t,w) =  \frac{1}{(2\pi)^n} \left(-\frac{1}{\sinh(t)}\frac{d}{dt}\right)^{n-1} \int_{H_n(\mathbb{C})} N_k(t,w,y)f(y) \frac{dy}{(1-||y||^2)^{n+1}}.  
\end{equation*}
Now, the wave equation: 
\begin{equation*}
\partial_t^2 h(t,w) = (D_k +n^2) h(t,w), \quad t \in \mathbb{R}, w \in H_n(\mathbb{C}), 
\end{equation*}
with intial values 
\begin{equation*}
h(0,w) = f(w), \quad \partial_t h(t,w) = 0,
\end{equation*}
is satisfied by 
\begin{equation*}
h(t,w) = \partial_t u(t,w) = -\frac{\sinh(t)}{(2\pi)^n} \left(-\frac{1}{\sinh(t)}\frac{d}{dt}\right)^{n} \int_{H_n(\mathbb{C})} N_k(t,w,y) f(y)\frac{dy}{(1-||y||^2)^{n+1}}.
\end{equation*}
Indeed, 
\begin{equation*}
u(t,w) = \frac{\sin(t\sqrt{-D_k-n^2})}{\sqrt{-D_k-n^2}}(f)(w)
\end{equation*}
while 
\begin{equation*}
h(t,w) = \cos(t\sqrt{-D_k-n^2})(f)(w),
\end{equation*}
where these formulas are defined by means of the spectral Theorem. Finally, the following formula holds in the weak sense (see e.g. \cite{Gri-Nog}): for any $t > 0$ and any non positive selfadjoint operator $L$,
\begin{equation}\label{FormSpec}
e^{tL} = \frac{1}{\sqrt{4\pi t}}\int_{\mathbb{R}} e^{-x^2/4} \cos(x\sqrt{-L}) dx. 
\end{equation}
Taking $L = D_k + n^2$, plugging the expression of $h(t,z)$ in the right hand side of \eqref{FormSpec} and performing successive integrations by parts there, we obtain the expression of the heat kernel for $D_k$ displayed in the Theorem.   
\end{proof}
\begin{remark}\label{hyperbolic}
Note that 
\begin{equation*}
\frac{e^{-n^2t}}{(2\pi)^n\sqrt{4\pi t}}\left(-\frac{1}{\sinh(x)}\frac{d}{dx}\right)^n e^{-x^2/(4t)} 
\end{equation*} 
is the heat kernel on the real hyperbolic space ${\bf H}^{2n+1}$ of dimension $2n+1$ (see e.g. \cite{Gri-Nog}). It arose in \cite{Bau-Bon} and in \cite{Wan} from the analytic continuation of the AdS space and was denoted there by $q_t(\cosh(x))$ where $x$ is interpreted as the geodesic distance in ${\bf H}^{2n+1}$ from the north pole. Moreover, if $k=0$, $v_{t,n,0}$ reduces to the heat kernel of the complex hyperbolic space and the expression displayed in the Theorem coincides with Theorem 3.1, (i) in \cite{OM}. 
\end{remark}

With the help of Theorem \ref{main1}, we are ready to prove our main result. 

\begin{proof}[Proof of the Theorem \ref{main}]

Let $p_{t,n} (w,y,\theta)$ denote the heat kernel of $\square$ with respect to the reference measure: 
\begin{equation*}
\frac{dy}{(1-||y||^2)^{n+1}} \frac{d\theta}{2\pi}.
\end{equation*}

This is smooth in the fiber coordinate $\theta \in \mathbb{R}/2\pi \mathbb{Z}$, and as such it has the Fourier expansion: 
\begin{equation*}
p_{t, n}(w,y,\theta) = \sum_{k \in \mathbb{Z}} v_{t,n,k/2}(w,y)e^{-ik\theta}, \quad w,y \in H_n(\mathbb{C}).
\end{equation*}
Applying the operator $e^{t\partial_{\theta}^2}$ to $p_{t,n,k}(w,y,\cdot)$, we obtain the following series for the subelliptic heat kernel of the complex AdS space:
\begin{align*}
e^{t\partial_{\theta}^2}p_{t,n}(w,y,\cdot)(\theta) & =  \int_{\mathbb{R}} \sum_{k \in \mathbb{Z}} v_{t,n,k/2}(w,y)e^{-ik(r+\theta)} e^{-r^2/(4t)}\frac{dr}{\sqrt{4\pi t}}
\\& = \sum_{k \in \mathbb{Z}} v_{t,n,k/2}(w,y)e^{-ik\theta} e^{-tk^2}.
\end{align*}

Now, perform the variable change $\cosh(x) = \cosh(u)\cosh(d(w,y)), x > 0, u >0,$ in \eqref{IntRe} and take $w=0$ in order to derive: 
\begin{equation*}
e^{t\partial_{\theta}^2}p_t(0,y,\cdot)(\theta) = 2\sum_{k \in \mathbb{Z}}e^{-ik\theta} e^{-tk^2} \int_{0}^{\infty} du  \,\, q_t( \cosh(u)\cosh(d(0,y))) {}_2F_1\left(-k, k, \frac{1}{2}; \frac{1 - \cosh(u)}{2}\right).
\end{equation*}
Next, notice that 
\begin{equation*}
{}_2F_1\left(-k, k, \frac{1}{2}; \frac{1 - \cosh(u)}{2}\right) = \cosh(ku),
\end{equation*}
therefore 
\begin{equation*}
e^{t\partial_{\theta}^2}p_t(0,y,\cdot)(\theta) = \sum_{k \in \mathbb{Z}}e^{-ik\theta} e^{-tk^2} \int_{\mathbb{R}} du  \,\, q_t( \cosh(u)\cosh(d(0,y))) e^{ku}.
\end{equation*}
On the other hand, the Poisson summation formula: 
\begin{equation*}
\sqrt{x}\sum_{k \in \mathbb{Z}} e^{-\pi x(k+s)^2} = \sum_{k \in \mathbb{Z}} e^{-\pi k^2/x} e^{2i\pi k s}, \quad x > 0, s \in \mathbb{R}, 
\end{equation*}
entails
\begin{align*}
\sum_{k \in \mathbb{Z}} e^{(u-i\theta - 2ik\pi)^2/(4t)} & = e^{(u-i\theta)^2/(4t)} \sum_{k \in \mathbb{Z}} e^{-k^2\pi^2/t} e^{ik\pi(u-i\theta)/t}  
\\& = \sqrt{\frac{t}{\pi}}  e^{(u-i\theta)^2/(4t)}\sum_{k \in \mathbb{Z}}   e^{-t((u-i\theta)/(2t) + k)^2} 
\\& =  \sqrt{\frac{t}{\pi}}  \sum_{k \in \mathbb{Z}}   e^{-tk^2} e^{k(u-i\theta)}. 
\end{align*}
As a result, Fubini Theorem entails
\begin{align*}
\frac{1}{\sqrt{4\pi t}} \sum_{k \in \mathbb{Z}} \int_{\mathbb{R}} e^{(u-i\theta - 2ik\pi)^2/(4t)} q_t( \cosh(u)\cosh(d(0,y))) du & = \int_{\mathbb{R}}\sum_{k \in \mathbb{Z}}   e^{-tk^2} e^{k(u-i\theta)} q_t( \cosh(u)\cosh(d(0,y)))  \frac{du}{2\pi}
\\& = \frac{1}{2\pi} e^{t\partial_{\theta}^2}p_t(0,y,\cdot)(\theta),
\end{align*}
as desired (note that the factor $1/(2\pi)$ is due to our normalization of the reference measure on the fiber). 
\end{proof}

\end{document}